
\documentclass[preprint,authoryear]{elsarticle}



\usepackage{mathtools}
\usepackage{tikz-cd}
\usepackage{amsmath,amssymb,amsfonts,amsthm,latexsym}
\usepackage{mathrsfs}
\usepackage{xypic}


\journal{Journal of Symbolic Computation}

\usepackage{blkarray}
\usepackage{graphicx}
%
%
%
%
%


\usepackage{amsmath,amssymb,amsfonts,latexsym,amsthm}
\usepackage{color}
\usepackage{eucal}
\usepackage{xspace}
\usepackage{hyperref}
\usepackage{tikz-cd}
\usepackage{bm}
\usepackage{algpseudocode,algorithm,algorithmicx}
\usepackage[title]{appendix}
\usepackage{listings}
\usepackage{multicol}
\usepackage{subcaption}





\newtheorem{lemma}{Lemma}

\newtheorem{theorem}{Theorem}
\theoremstyle{definition}
\newtheorem{definition}{Definition}




\usepackage[round]{natbib}




\DeclareMathOperator{\Ima}{Im}
\DeclareMathOperator*{\rnk}{rnk}
\begin{document}

\begin{frontmatter}
	
	
	
	\title{A computational framework for weighted simplicial homology}
	

\author[add1]{Andrei C. Bura}
\ead{cb8wn@virginia.edu}

\author[add2]{Neelav S. Dutta}
\ead{nsd8uc@virginia.edu}

\author[add1]{Thomas J. X. Li\corref{c1}}
\ead{jl9gx@virginia.edu}

\author[add1,add2]{Christian M. Reidys\corref{c1}}
\cortext[c1]{Corresponding authors}
\ead{cmr3hk@virginia.edu}
	
\address[add1]{Biocomplexity Institute \& Initiative,
		University of Virginia, 995 Research Park Blvd,
	Charlottesville, VA, 22911}
\address[add2]{	Department of Mathematics,
	University of Virginia,
	141 Cabell Dr, Charlottesville, VA 22903}
	
	\begin{abstract}
We provide a bottom up construction of torsion generators for weighted homology of a weighted complex over a discrete valuation ring $R=\mathbb{F}[[\pi]]$. This is achieved by starting from a basis for classical homology of the $n$-th skeleton for the underlying complex with coefficients in the residue field $\mathbb{F}$ and then lifting it to a basis for the weighted homology with coefficients in the ring $R$. Using the latter, a bijection is established between $n+1$ and $n$ dimensional simplices whose weight ratios provide the exponents of the $\pi$-monomials that generate each torsion summand in the structure theorem of the weighted homology modules over $R$. We present algorithms that subsume the torsion computation by reducing it to normalization over the residue field of $R$, and describe a Python package we implemented that takes advantage of this reduction and performs the computation efficiently.

{\bf Keywords}: weighted simplicial complex, weighted homology, bijection, algorithm, discrete valuation, Smith normalization.
		
	\end{abstract}
	
	
	
		

		
	
\end{frontmatter}




\section{Introduction}

Topological Data Analysis (TDA) is a relatively recent field that emerged from applications in algebraic topology and computational geometry. Geometric approaches to data analysis date as far back as~\citep{Roux:05} and TDA was established as a field beginning with the works of~\cite{Edelsbrunner:02} and of~\cite{Zomorodian:04} in persistent homology. This was popularized in a seminal publication~\citep{Carlsson:09}. 
The central dogma of TDA is the idea that topological and geometrical approaches can provide qualitative, and sometimes quantitative information on the structural relations of points within a data-set~\citep{Chazal:17}. The goal thus far of TDA is to formalize, within well-founded mathematical and algorithmic frameworks, analysis methods for point cloud data (metric or Euclidean), that represent the global underlying structure of the data. In recent years, tremendous efforts have been undertaken to provide the data science community at large with efficient data structures and robust algorithms for TDA, by the implementation of standard libraries such as the Gudhi library (C++ and Python)~\citep{Maria:14} and its R software
interface~\citep{Fasy:14}. TDA thus rapidly emerges as a novel and reliable set of tools in the data scientist's arsenal.

From the perspective of applications, TDA has found use with a high degree of success in diverse fields such as  e.g., material science~\citep{Kramar:13,Nakamura:15}; 3D shape analysis~\citep{Skraba:10,Turner:14}; multivariate time series analysis~\citep{Seversky:16}, biology~\citep{Yao:09}, chemistry~\citep{Lee:17}, sensor networks~\citep{Desilva:07} just to name a few. 

Many real world data sets have been shown to exhibit simplicial structure~\citep{Moore:12,Ramanathan:11,Lin:05} and indeed have been organised as such~\citep{Carlsson:09,Spivak:09}. However, a prevalent feature of these data-sets is the presence of additional simplex specific data~\citep{Ebli:20}. At present, such data have not been satisfactorily incorporated or reflected in the investigations of the simplicial structures.

To illustrate this point, we shall have a closer look at research collaboration networks. Such networks can be organised simplicially in a natural way as follows: authors are considered vertices, and a $k$ simplex in-between $k+1$ authors appears if those authors appeared together as authors on a paper (by themselves or amongst others). This question straightforwardly leads to a simplicial complex that can be studied via TDA.

However, a crucial feature of such a network, that cannot be expressed via the simplicial structure is the citation number of a simplex. 
I.e., the number of citations of papers that the $k+1$ authors appeared on together (by themselves or amongst others).

For each simplex, this integer constitutes a weight and a key observation regarding these weights is the fact that the weight of a face $\tau$ of the simplex $\sigma$ is larger than or equal to the weight of $\sigma$. The weight of $\tau$ however does not necessarily divide the weight of $\sigma$ - in fact this is more likely always the case for arbitrary integer weighted complexes. As such the theory put forward by~\cite{Dawson:90,Ren:18} is not immediately applicable here. However, we can amend the theory by considering an appropriate ring - in fact, a discrete valuation ring (DVR) R - where the monotonicity constraint on the weights translates into a divisibility condition.

As a concrete example, consider the following situation $A,B,C,D$ are four authors that have not appeared as co-authors on any papers, but any subset of them has written papers together. Suppose furthermore, that any triplet of authors has been cited once, while any pair of authors has been cited twice except for the pair $A$ and $B$ which has been cited four times in total. Finally, each individual author has been cited five times. Let $X$ denote the simplicial complex obtained by this data-set. Note that $H_1(X)\cong 0$ as this complex is an empty tetrahedra. However, we see that when taking the weights into account and passing to weighted homology over the DVR $R$ with uniformizer $\pi$, the corresponding group will in fact be full torsion, i.e. $H_(1,R)(X)\cong R/(\pi)\oplus R/(\pi)\oplus R/(\pi)$ (see Figure~\ref{F:tetra}). This indicates that the novel DVR homology encodes finer features of the data than simplicial homology alone.

\begin{figure}[h]
	\centering
	\includegraphics[width=0.7\textwidth]{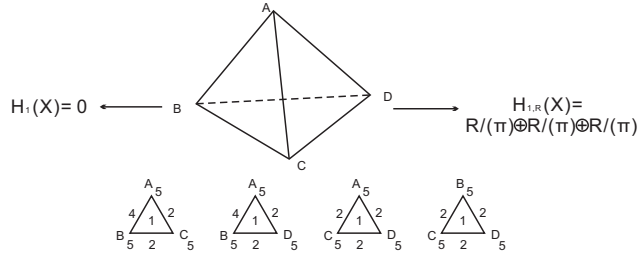}
	\caption
	{\small 
	An example: a weighted triangulation of a sphere. Singular homology yields a trivial $H_1$. However when weights are taken into account, the first weighted homology group is a nontrivial torsion module.
	}
	\label{F:tetra}
\end{figure}

A naive attempt at the computation of weighted homology might proceed via Smith Normalization directly on the weighted boundary matrix with coefficients in rational polynomials localized at the prime ideal $x$. This approach proves quickly intractable due to the storing and processing of the representations of ring elements and the implementation of their algebra. The Smith normalization process (already susceptible to coefficient ballooning) becomes too unwieldy, even for the smallest of complexes (on the order of ten simplices!).

To overcome this challenge we will end up proving that the Smith Normalization over $R=\mathbb{F}[[\pi]]$ is tantamount to the computation of a Normalization over the residue field of the ring $R$ together with the computation of a simplex pairing whose weights will determine the torsion.


Our paper is structured as follows:

Section 2 introduces the definition of a weighted simplicial complex, the weighted boundary operator and the construction of the corresponding weighted homology over a discrete valuation ring $R$ with uniformizer $\pi$.

Section 3 provides a bottom up construction of torsion generators for the weighted homology of a weighted complex over the prototype ring $R=\mathbb{F}[[\pi]]$. This is achieved by starting from a basis for classical homology of the $n$-th skeleton for the underlying complex with coefficients in the residue field $\mathbb{F}$ and then lifting it to a basis for the weighted homology with coefficients in the ring $R$. Using the latter, a bijection is established between $n+1$ and $n$ dimensional simplices whose weight ratios provide the exponents of the $\pi$-monomials that generate each torsion summand in the structure theorem of the weighted homology module over $R$.

Section 4 describes two algorithms that subsume the torsion computation elucidated in Section 3, as well as some remarks on computational improvement over classical Smith Normalization methods, yielded from considerations on the construction of the bijection in Section 2.

Section 5 describes our Python package that implements the algorithms in Section 4 as well as some technical challenges we had to overcome during the implementation process.


\section{Preliminaries}




\subsection{Weighted simplicial homology}


In ~\cite{Dawson:90,Ren:18} the notion of a weighted simplicial complex was introduced and studied. Below we provide a specialized definition and construction for the homology of such complexes, following~\cite{Bura_weighted_21,Li:22}.
\begin{definition}
	A \emph{weighted simplicial complex}
	is a pair $(X,\omega)$ consisting of a simplicial complex $X$ and a non-negative integer weight function
	function $\omega : X \rightarrow \mathbb{N}$ such that for any simplices $\sigma,\mu \in X$ we have
	\[
	\sigma\subseteq \mu \implies \omega(\sigma)\geq \omega (\mu).
	\]
\end{definition}
Let $R$ be a discrete valuation ring (DVR) with uniformizer $\pi$. Then, $(X,\omega)$ naturally induces a (tropical) simplex weight function $v:X\rightarrow R$ by setting $v(\sigma)=\pi^{\omega(\sigma)}$. In this paper we use $(X,\omega)$ and $(X,v)$ interchangeably to denote a weighted simplicial complex. The \emph{weighted chain complex} $C_{n}(X,R)$ is defined to be the free $R$-module generated by the set of $X$ $n$-simplices, and we can set the \emph{weighted boundary map} to be, 
\[ 
\partial^v_n:C_{n}(X,R)\to C_{n-1}(X,R),\quad\quad
\partial^v_n(\sigma)=\sum_{i=0}^n  \frac{v(\hat{\sigma}_i)}{v(\sigma)}\cdot (-1)^i\hat{\sigma}_i,
\]
where $\frac{v(\hat{\sigma}_i)}{v(\sigma)}:=\pi^{\omega(\hat{\sigma}_i) -\omega(\sigma)}$.
The \emph{weighted homology} $H_{n}^v(X)$ of  $(X,v)$ is then the sequence of $R$-modules $H_{n}^v(X) =\ker \partial_{n}^v / \text{\rm Im } \partial_{n+1}^v $. 

\subsection{The chain maps $\theta_n^{v,v'}$}
Given two weighted complexes $(X,v')$ and $(X,v)$, we can define chain maps
\[
\theta_n^{v',v}\colon C_{n}(X,R)\to C_{n}(X,R),\quad\quad
	\theta_n^{v',v}(\sigma)=\frac{v(\sigma)}{v'(\sigma)} \sigma. 
\]
By abuse of notation we write $\theta_n=\theta_n^{v',v}$. These induce natural homomorphisms
\[\bar\theta_n\colon H_{n}^{v'}(X) \to H^v_{n}(X),\quad\quad \bar\theta_n(\sum_ja_j\sigma_j+\text{\rm Im }\partial^{v'}_{n+1}):=
\theta_n(\sum_ja_j \sigma_j) +\text{\rm Im }\partial^v_{n+1}.
\]
Again, abusing notation we will write $\theta_n$ instead of $\bar\theta_n$ throughout the remainder of the paper.

\section{The explicit structure of the weighted simplicial homology $H_{n}^v(X)$}

In this section we provide a bottom up construction of the torsion generators in the weighted homology module for a fixed weighted complex. The model for our DVR going forward will be a power series ring $R=\mathbb{F}[[\pi]]$ with residue field $\mathbb{F}$.

\begin{lemma}\label{L:basis}
Let $(X,v)$ be a weighted complex. There exists $K\subsetneq \{\sigma\mid \sigma\in C_{n}(X)\}$ and a distinguished $K$-basis $\mathcal{B}_K=\{\beta_\kappa | \kappa\in K\}$ for $H_n(X^n;\mathbb{F})$ such that \\
(i)  $M=\complement{K}$ is a basis of $\partial_n(C_{n}(X;\mathbb{F}))$,\\
(ii) each $\beta_\kappa$ contains a  distinguished  $n$-simplex $\kappa \in K$,  having coefficient one,\\
(iii) $\kappa$ appears exclusively in $\beta_\kappa$ and has minimum weight $\omega$ among all $\beta_\kappa$-simplices,\\
(vi) $\mathcal{B}_K=\{\beta_\kappa | \kappa\in K\}$ is also a basis for $H_n(X^n;R)$.
\end{lemma}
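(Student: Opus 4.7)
The plan is to first build an $\mathbb{F}$-basis of $\ker\partial_n\subseteq C_n(X;\mathbb{F})$ that is sensitive to simplex weights, and then lift it to $R$ via the intertwining between the weighted and the classical boundaries. Throughout I use that $X^n$ has no $(n+1)$-simplices, so $H_n(X^n;\mathbb{F})=\ker\partial_n$ and $H_n(X^n;R)=\ker\partial_n^v$.

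\textit{Classical basis via weight-ordered reduction.} Enumerate the $n$-simplices as $\sigma_1,\ldots,\sigma_N$ with $\omega(\sigma_1)\geq\cdots\geq\omega(\sigma_N)$ and perform a greedy Gauss reduction on the columns of the classical boundary matrix in this order. Initialize $K=M=\emptyset$; for each $\sigma_i$, if $\partial_n\sigma_i$ lies in $\operatorname{span}_{\mathbb{F}}\{\partial_n\mu \mid \mu\in M\}$, choose $c_{i,\mu}\in\mathbb{F}$ with $\partial_n(\sigma_i+\sum_\mu c_{i,\mu}\mu)=0$, set $\bar\beta_{\sigma_i}:=\sigma_i+\sum_\mu c_{i,\mu}\mu$, and put $\sigma_i\in K$; otherwise put $\sigma_i\in M$. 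This delivers (i)--(iii) at the classical level: $\{\partial_n\mu\}_{\mu\in M}$ is linearly independent (pivots are only added when independent) and spans $\partial_n C_n(X;\mathbb{F})$; each $\bar\beta_\kappa$ is supported on $\{\kappa\}\cup M$ with coefficient $1$ at $\kappa$, so every $\kappa\in K$ appears only in $\bar\beta_\kappa$; and the decreasing processing order forces every correcting $\mu$ to satisfy $\omega(\mu)\geq\omega(\kappa)$.

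\textit{Lifting to $R$.} Let $\theta_n:\sigma\mapsto\pi^{\omega(\sigma)}\sigma$; a direct check gives the intertwining $\partial_n^v\circ\theta_n=\theta_{n-1}\circ\partial_n$. Define
\[
\beta_\kappa\;:=\;\pi^{-\omega(\kappa)}\theta_n(\bar\beta_\kappa)\;=\;\kappa+\sum_{\mu\in M}c_{\kappa,\mu}\,\pi^{\omega(\mu)-\omega(\kappa)}\mu.
\]
Property (iii) for $\bar\beta_\kappa$ forces $\omega(\mu)-\omega(\kappa)\geq 0$, so $\beta_\kappa\in C_n(X;R)$, and the intertwining yields $\partial_n^v\beta_\kappa=\pi^{-\omega(\kappa)}\theta_{n-1}(\partial_n\bar\beta_\kappa)=0$. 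The simplex support is unchanged by the scaling, so properties (i)--(iii) transfer verbatim from $\bar\beta_\kappa$ to $\beta_\kappa$.

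\textit{Basis over $R$.} Linear independence of $\{\beta_\kappa\}_{\kappa\in K}$ over $R$ is immediate from (ii): any relation $\sum_\kappa a_\kappa\beta_\kappa=0$ forces $a_\kappa=0$ on reading off the coefficient of $\kappa$. For spanning, take $\gamma\in\ker\partial_n^v$ with $K$-coefficients $a_\kappa\in R$ and set $\gamma':=\gamma-\sum_\kappa a_\kappa\beta_\kappa$; then $\gamma'\in\ker\partial_n^v\cap\bigoplus_{\mu\in M}R\cdot\mu$. It suffices to show $\partial_n^v$ is injective on this free submodule. Over $F:=\mathbb{F}((\pi))$ one has $\partial_n^v\mu=\pi^{-\omega(\mu)}\theta_{n-1}(\partial_n\mu)$; since $\theta_{n-1}$ is $F$-invertible and $\{\partial_n\mu\}_{\mu\in M}$ is $\mathbb{F}$-linearly independent by (i), the $\{\partial_n^v\mu\}_{\mu\in M}$ are $F$-linearly independent, and torsion-freeness of $\bigoplus_{\mu\in M}R\cdot\mu$ upgrades this to $R$-linear independence. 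Hence $\gamma'=0$, giving (iv). I expect the main technical point to be this last passage: transporting $\mathbb{F}$-linear independence of classical boundary columns across the conjugation by $\theta$, which is invertible only over $F$ (not over $R$), and then descending back to $R$-linear independence via torsion-freeness.
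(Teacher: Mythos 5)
Your construction of $K$, $M$ and the classes $\bar\beta_\kappa$ by processing the $n$-simplices in decreasing weight order is exactly the paper's argument, and it does establish (i)--(iii). The problem is part (vi). You declare $H_n(X^n;R)=\ker\partial_n^v$, i.e.\ you read the statement as being about the \emph{weighted} homology of the skeleton; but in the paper's notation the weighted homology is always written $H_n^v(X^n)$, and $H_n(X^n;R)$ in this lemma means ordinary simplicial homology of $X^n$ with coefficients in $R$, i.e.\ $\ker(\partial_n\colon C_n(X;R)\to C_{n-1}(X;R))$. Claim (vi) asserts that the \emph{same} chains $\beta_\kappa$ (with coefficients in $\mathbb{F}\subset R$) that form the $\mathbb{F}$-basis are also an $R$-basis of this classical kernel; the paper proves this by Nakayama's lemma applied to $H_n(X^n;R)/\pi H_n(X^n;R)\cong H_n(X^n;\mathbb{F})$. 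Your proposal does not prove this: you replace the $\beta_\kappa$ by the rescaled chains $\pi^{-\omega(\kappa)}\theta_n(\bar\beta_\kappa)$ and show that \emph{those} form an $R$-basis of $\ker\partial_n^v$. That is a genuinely different statement --- indeed, under your reading the lemma as literally stated would be false, since the original $\bar\beta_\kappa$ are generally not weighted cycles at all, which is precisely why you had to rescale. So as a proof of Lemma~\ref{L:basis}(vi) there is a gap: the claim actually made there is never addressed.

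What you did prove is correct, and it is worth saying what it corresponds to: your rescaled classes are exactly the $\hat\beta_\kappa$ of the paper, so your ``lifting'' paragraph is in substance Theorem~\ref{T:basis}, and your key step --- that $\{\partial_n^v(\mu)\mid\mu\in M\}$ is $R$-linearly independent, obtained by conjugating with $\theta$ over the fraction field $\mathbb{F}((\pi))$ and descending by torsion-freeness --- is a cleaner route to Lemma~\ref{L:partialbasis}(2) than the paper's coefficientwise power-series computation. Moreover, the missing claim (vi) is recoverable with little effort, either by the paper's Nakayama argument, or by your own technique applied to the \emph{unweighted} boundary over $R$: since $\partial_n$ has entries in $\mathbb{F}$, the $\{\partial_n(\mu)\mid\mu\in M\}$ remain independent over $\mathbb{F}((\pi))$, hence over $R$, and the same ``subtract the $K$-part and conclude the remainder vanishes'' argument shows the unmodified $\mathcal{B}_K$ spans $\ker\partial_n$ over $R$. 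But as submitted, the proposal proves the wrong statement for (vi).
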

\begin{proof}
We recursively construct an Ansatz for $\mathcal{B}_K$ as follows:

Let the $n$-simplices in $X$ be ordered $\sigma_1,\sigma_2,\ldots,\sigma_l$ such that $\omega(\sigma_{i})\geq \omega(\sigma_{i+1})$ for all $1\le i\le l-1$. We first construct a sequence $\{M_m\}_{m\ge 0}$ of sets of $n$-simplices by examining the above $n$-simplices in order one by one. We set $M_0=\varnothing$, $M_1=M_0\cup \{\mu_1\mid \mu_1=\sigma_1\}$ and, given $M_m$ already constructed, $M_{m+1}=M_m \cup \{\mu_{m+1}=\sigma_{m+1}\}$ if 
\begin{equation}\label{Eq:beta}
\partial_n   (\sum_{\mu_j\in M_m}r_j \mu_j + r_{m+1}\sigma_{m+1})=0
 \end{equation}
has no nontrivial solution $r_j \in \mathbb{F}$. Otherwise, set $M_{m+1}=M_m$. Then, $\partial_n(M_m)\subset \partial_n(M_{m+1})$ and when the algorithm terminates, we obtain the $n$-simplex bi-partition $M\dot\cup K$ for $M=M_l$ and $K:=\complement{M}$, where by construction, $M$ is a basis of $\partial_n(C_{n}(X;\mathbb{F}))$. 

Furthermore, for each $\kappa=\sigma_{m+1} \in K$,
\[
\beta_\kappa=\sum_{\mu_j\in M_m}q_j \mu_j +\kappa
\]
is a $H_n(X^n;\mathbb{F})$-cycle. Note that $\beta_\kappa$ is unique. Indeed, otherwise 
\[
\partial_n(\sum_{\mu_j\in M_m}q_j \mu_j -\sum_{\mu_j\in M_m}q'_j \mu_j)=0, \]
which is a contradiction since $\sum_{\mu_j\in M}(q_j-q'_j) \mu_j\neq 0 $ cannot be a $H_n(X^n;\mathbb{F})$-cycle by definition. Thus, $\kappa$ appears exclusively in $\beta_\kappa$ and, due to the order in which the $n$-simplices are processed, has minimum $\omega$ weight among the $\beta_\kappa$-simplices.

Now, since each $\kappa$ uniquely corresponds to a $\beta_\kappa$, $\mathcal{B}_K$ are linearly independent over $\mathbb{F}$. To prove $\mathcal{B}_K$ is a basis, it then suffices to show that $\langle\mathcal{B}_K\rangle_\mathbb{F}=H_n(X^n;\mathbb{F})$. As previously noted, any cycle $c=\sum a_j \sigma_j\in H_n(X^n;\mathbb{F})$ must contain at least one $\kappa\in K$. Let $c_0=c-\sum_{\kappa \in K} a_\kappa \beta_\kappa$ be $c$ with all its $\kappa$ simplices eliminated. Then the cycle $c_0$ contains only simplices in $M$. But then $c_0=0$ and so $c=\sum_{\kappa \in K} a_\kappa \beta_\kappa$.

For the local ring $R$, $\pi R$ is the maximal ideal of  $R$, and the quotient of  $H_n(X^n;R)/ \pi  H_n(X^n;R)$ is isomorphic to $H_n(X^n;\mathbb{F})$ due to the structure theorem of finitely generated modules over principal ideal domains.
In view of Nakayama's Lemma\footnote{Let $M$ be a finitely generated module  over a local ring $R$ with maximal ideal $m$. Then  each basis of $M/m M$ lifts to a minimal set of generators of  $M$.}, if 
$\mathcal{B}_K=\{\beta_\kappa | \kappa\in K\}$ were a basis for $H_n(X^n;\mathbb{F})$ then it would also be a basis for $H_n(X^n;R)$,
completing the proof.
\end{proof}

\begin{lemma}\label{L:partialbasis}
	Let $(X,v)$ be a weighted complex and suppose that $\mathcal{B}_K=\{\beta_\kappa | \kappa\in K\}$ is
a basis of $H_n(X^n;\mathbb{F})$ given by Lemma~\ref{L:basis}, and $M:=\complement{K}$.
	Then  \\
(1)	$\{\partial_{n}(\mu_j)| \mu_j \in M\}$  is a basis of $\partial_{n} (C_{n}(X;\mathbb{F}))$.\\
(2)	$\{\partial_{n}^v(\mu_j)| \mu_j \in M\}$  is a basis of $ \partial_{n}^v (C_{n}(X;R))$.\\
\end{lemma}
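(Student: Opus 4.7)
For part (1), the plan is to read the statement off the construction of $M$ in the proof of Lemma~\ref{L:basis}. By design, $\mu_{m+1}$ is added to $M$ precisely when $\{\partial_n(\mu_1),\dots,\partial_n(\mu_{m+1})\}$ remains $\mathbb{F}$-linearly independent, so the family $\{\partial_n(\mu_j)\}_{\mu_j\in M}$ is $\mathbb{F}$-linearly independent by induction on $m$. Spanning then follows from the cycle relation $\partial_n(\beta_\kappa)=0$, which for each $\kappa\in K$ rewrites as $\partial_n(\kappa)=-\sum_j q_j\,\partial_n(\mu_j)$ and, together with the trivial identity for $\mu_j\in M$, expresses $\partial_n(\sigma)$ in the claimed span for every $n$-simplex $\sigma$.

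For part (2), the key ingredient is the bookkeeping identity
\[
\pi^{\omega(\sigma)}\,\partial_n^v(\sigma)\;=\;\sum_{i=0}^{n}(-1)^i\,\pi^{\omega(\hat\sigma_i)}\,\hat\sigma_i,
\]
obtained by clearing denominators in the definition of $\partial_n^v$. Equivalently, the $R$-linear weight-rescaling map $\Phi\colon C_{n-1}(X;R)\to C_{n-1}(X;R)$ sending each $(n-1)$-simplex $\tau$ to $\pi^{\omega(\tau)}\tau$ satisfies $\Phi(\partial_n(\sigma))=\pi^{\omega(\sigma)}\,\partial_n^v(\sigma)$ on each generator. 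Applying $\Phi$ to the cycle relation $\sum_j q_j\partial_n(\mu_j)+\partial_n(\kappa)=0$, which is valid in $C_{n-1}(X;R)$ after lifting the $\mathbb{F}$-coefficients $q_j$ to $R$, therefore yields
\[
\pi^{\omega(\kappa)}\,\partial_n^v(\kappa)\;=\;-\sum_j q_j\,\pi^{\omega(\mu_j)}\,\partial_n^v(\mu_j).
\]
By Lemma~\ref{L:basis}(iii) the weight $\omega(\kappa)$ is at most $\omega(\mu_j)$ whenever $q_j\ne 0$, so every exponent $\omega(\mu_j)-\omega(\kappa)$ is non-negative; cancelling $\pi^{\omega(\kappa)}$ inside the $\pi$-torsion-free module $C_{n-1}(X;R)$ then exhibits $\partial_n^v(\kappa)$ as an explicit $R$-combination of the $\partial_n^v(\mu_j)$'s. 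Hence $\{\partial_n^v(\mu_j)\}$ generates $\partial_n^v(C_n(X;R))$.

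To promote this generating set to a basis, I would do a rank count. After inverting $\pi$ and passing to $\mathrm{Frac}(R)$, the identity above shows that $\partial_n^v$ and $\partial_n$ agree up to pre- and post-composition with invertible diagonal operators, so their images have the same $\mathrm{Frac}(R)$-dimension. Since $\partial_n$ has entries in $\{0,\pm 1\}$, its rank is preserved under any field extension and equals $|M|$ by part (1). Hence $\partial_n^v(C_n(X;R))$, a submodule of the free module $C_{n-1}(X;R)$ over the PID $R$, is itself free of rank $|M|$; any $|M|$-element generating set of such a module is automatically a basis, because the resulting surjection $R^{|M|}\twoheadrightarrow\partial_n^v(C_n(X;R))$ becomes an isomorphism after $\otimes_R\mathrm{Frac}(R)$ and its kernel is torsion inside the torsion-free $R^{|M|}$. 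The main obstacle I anticipate is the weight bookkeeping: Lemma~\ref{L:basis}(iii) must be invoked in exactly the right place to guarantee that dividing by $\pi^{\omega(\kappa)}$ lands back in $R$ rather than in $\mathrm{Frac}(R)$; everything else is either a direct consequence of Lemma~\ref{L:basis} or a standard fact about free modules over a discrete valuation ring.
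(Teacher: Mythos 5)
Your proposal is correct, and it splits into a part that mirrors the paper and a part that genuinely departs from it. Part (1) and the spanning half of part (2) are exactly the paper's argument: the paper also reads independence of $\{\partial_n(\mu_j)\}$ off the construction in Lemma~\ref{L:basis}, and it obtains generation of $\partial_n^v(C_n(X;R))$ by applying the rescaling map (its $\theta$, your $\Phi$) to the relation $\partial_n(\beta_\kappa)=0$ and using the minimality of $\omega(\kappa)$ from Lemma~\ref{L:basis}(iii) to divide by $v(\kappa)$ inside $R$ --- precisely your computation. Where you diverge is the $R$-linear independence of $\{\partial_n^v(\mu_j)\}$: the paper proves it by brute force, expanding hypothetical coefficients $r_j=\sum_l h_{j,l}\pi^l$ as power series, comparing coefficients of each $\pi^k$, and reducing to the $\mathbb{F}$-independence of $\{\partial_n(\mu_j)\}$; you instead do a rank count after inverting $\pi$, note that a generating set of size equal to the rank of a free module over the domain $R$ must be a basis (kernel is torsion inside a torsion-free module), and conclude. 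Your route is cleaner and less computational, and it isolates the structural fact being used (rank of the image is insensitive to the diagonal rescaling over $\mathrm{Frac}(R)$); the paper's route is more elementary and self-contained, staying entirely at the level of explicit coefficients. One small caution: your justification that the rank of $\partial_n$ is ``preserved under any field extension because its entries lie in $\{0,\pm1\}$'' is not quite the right reason --- rank of a matrix over a field is preserved under any extension of that field, and what you actually need is that the entries (and the $q_j$'s you lift) live in $\mathbb{F}$ viewed as a subfield of $\mathrm{Frac}(R)$. This holds for the paper's standing model $R=\mathbb{F}[[\pi]]$, where $\mathbb{F}$ is a coefficient field, but would fail for a mixed-characteristic DVR such as $\mathbb{Z}_p$ (where the residue field does not embed in the fraction field and a $\{0,\pm1\}$-matrix can drop rank modulo $p$); so your argument, like the paper's, is tied to the equal-characteristic hypothesis and you should state that dependence explicitly.
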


\begin{proof}
Item $(1)$ is immediate from Lemma~\ref{L:basis}. For item $(2)$, by definition, $\langle\{\partial_{n}^v(\sigma)| \sigma \in C_n(X)\}\rangle_R=\partial_{n}^v (C_{n}(X;R))$.  In view of Lemma~\ref{L:basis} and $\partial_{n}^v\circ\theta_{n} = \theta_{n}\circ\partial_{n}$, we have
\[
0=  \theta_{n}  \circ \partial_{n}(\beta_\kappa)=\partial_{n}^v\circ \theta_{n} (\beta_\kappa) =
\sum_{\mu_j \in M_m } q_j v(\mu_j)  \partial_{n}^v (\mu_j) + v(\kappa) \partial_{n}^v(\kappa).
\]
Since $\omega(\kappa)$ is minimum among $\beta_\kappa$-simplices,
$
\partial_{n}^v(\kappa) =- \sum_{\mu_j\in M_m } \frac{q_j v(\mu_j)}{v(\kappa) }  \partial_{n}^v (\mu_j),
$
which shows that $\partial_{n}^v(C_{n}(X;R))=\langle\{\partial_{n}^v(\mu_j)|\mu_j \in M\}\rangle_R$. It remains to show that $\{\partial_{n}^v(\mu_j)| \mu_j \in M\}$ are $R$-linearly independent.
Suppose there exist $r_j\in R$ such that $\sum_j r_j \partial_{n}^v(\mu_j)=0$ and let $\{\sigma_i\}_i$ denote the set of $n$-simplices. Then, 
\[
\sum_j r_j \partial_{n}^v(\mu_j) = \sum_j r_j  \sum_{\{i| \sigma_i  \subset  \mu_j \}} (-1)^{c_{i,j}} \frac{v( \sigma_i )}{v(\mu_j )}  \sigma_i=\sum_i  \sigma_i\sum_{\{j| \sigma_i  \subset  \mu_j \}} (-1)^{c_{i,j}} \frac{r_j v(\sigma_i)}{v(\mu_j )}.
\]
And so for fixed $i$,
\[
\sum_{\{j| \sigma_i  \subset  \mu_j \}} (-1)^{c_{i,j}} \frac{r_j v(\sigma_i)}{v(\mu_j )} =0.
\]
Letting $r_j=\sum_{l=0}^{\infty} h_{j,l} \pi^l$ with $h_{j,l}\in \mathbb{F}$,
\begin{align*}
& \sum_{\{j| \sigma_i  \subset  \mu_j \}}  \sum_{l=0}^{\infty} (-1)^{c_{i,j}}  h_{j,l}  \pi^{l+\omega(\sigma_i )-\omega(\mu_j )} =0\\
\Leftrightarrow\quad & \sum_{k}^{\infty}  \pi^k  
\sum_{\{j| \sigma_i  \subset  \mu_j \}}   (-1)^{c_{i,j}}  h_{j,k -\omega(\sigma_i )+\omega(\mu_j )}  =0\\
\Leftrightarrow\quad & \sum_{\{j| \sigma_i  \subset  \mu_j \}}   (-1)^{c_{i,j}}  h_{j,k -\omega(\sigma_i )+\omega(\mu_j )}  =0 \qquad \text{for any } k. 
\end{align*}
For each $k$,
\begin{align*}
\sum_j h_{j,k -\omega(\sigma_i ) +\omega(\mu_j )}  \partial_{n}(\mu_j)&=\sum_j h_{j,k -\omega(\sigma_i )+\omega(\mu_j )} \sum_{\{i| \sigma_i  \subset  \mu_j \}} (-1)^{c_{i,j}}\sigma_i  \\
&=
\sum_i  \sigma_i   \sum_{\{j| \sigma_i  \subset  \mu_j \}}   (-1)^{c_{i,j}}  h_{j,k -\omega(\sigma_i )+\omega(\mu_j )}  \\
&=0 .
\end{align*}
Since $\{\partial_{n}(\mu_j)| \mu_j \in M\}$ are $\mathbb{F}$-linearly independent, $h_{j,k -\omega(\sigma_i ) +\omega(\mu_j )} =0 $ for any $k,j$,  yielding $r_j=0$ as desired.
\end{proof}

\begin{definition}
Let $c=\sum_{j\in J} r_j \sigma_j\in H_n(X^n;\mathbb{F})$ be a cycle such that $r_j \neq 0 $ for any $j\in J$. Then $J$ is called the \emph{support} of $c$. Furthermore we define
\[
\hat{c}=\frac{\theta_n(c)}{\text{gcd}(\theta_n(c))}= \sum_{j\in J} r_j \frac{v(\sigma_j )}{\text{gcd}(\theta_n(c))}  \sigma_j ,
\]
where $\text{gcd}(\theta_n(c))=\text{gcd}\{v(\sigma_j)\}_{j\in J}= \pi^{\min_{j\in J} \omega(\sigma_j)}$.
\end{definition}

Note that $\hat{c}\in H_{n}^v(X^n)$, since $\partial_n^v (\hat{c})=\frac{\partial_n^v \circ \theta_n(c)}{\text{gcd}(\theta_n(c))}  = \frac{\theta_{n-1}\circ \partial_n (c)}{\text{gcd}(\theta_n(c))} =0$.

\begin{theorem}\label{T:basis} 
Let $(X,v)$ be a weighted complex and suppose $\mathcal{B}_K=\{\beta_\kappa | \kappa\in K\}$ is a basis of  $H_n(X^n;\mathbb{F})$ given by Lemma~\ref{L:basis}. Then $\hat{\mathcal{B}}_K=\{\hat{\beta}_\kappa | \kappa\in K\}$ is a basis for $H_{n}^v(X^n)$. Furthermore $\hat{\mathcal{B}}_K$ satisfies \\
(i) each $\hat{\beta}_\kappa $ contains a unique, distinguished  $n$-simplex $\kappa \in K$, \\
(ii) $\kappa$ appears exclusively in $\hat{\beta}_\kappa $ and has coefficient one in $\hat{\beta}_\kappa$.
\end{theorem}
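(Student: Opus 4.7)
The plan is to verify properties (i) and (ii) directly from the construction of $\hat{\beta}_\kappa$, then derive $R$-linear independence from those properties, and finally establish spanning by invoking Lemma~\ref{L:partialbasis}(2). A useful observation at the outset is that $X^n$ has no $(n+1)$-simplices, so $H_{n}^v(X^n) = \ker\partial_n^v$, and all identities may be checked at the chain level rather than modulo a boundary image.

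For (i) and (ii) I would use Lemma~\ref{L:basis}(iii): since $\omega(\kappa)$ is the minimum weight among the simplices appearing in $\beta_\kappa = \kappa + \sum_{\mu_j\in M_m} q_j \mu_j$, one has $\text{gcd}(\theta_n(\beta_\kappa)) = v(\kappa)$, and therefore
\[
\hat{\beta}_\kappa = \kappa + \sum_{\mu_j \in M_m} q_j \frac{v(\mu_j)}{v(\kappa)}\, \mu_j.
\]
Thus $\kappa$ occurs in $\hat{\beta}_\kappa$ with coefficient $1$, and because $\kappa$ is exclusive to $\beta_\kappa$ by Lemma~\ref{L:basis}, it remains exclusive to $\hat{\beta}_\kappa$ among $\{\hat{\beta}_{\kappa'}\}_{\kappa'\in K}$. $R$-linear independence then follows immediately: from $\sum_{\kappa} r_\kappa \hat{\beta}_\kappa = 0$, comparing the coefficient of any fixed $\kappa \in K$ yields $r_\kappa = 0$.

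For spanning, given an arbitrary cycle $c = \sum_{\kappa\in K} s_\kappa \kappa + \sum_{\mu_j\in M} t_j \mu_j \in \ker\partial_n^v$, I would form
\[
c' := c - \sum_{\kappa\in K} s_\kappa \hat{\beta}_\kappa.
\]
Since each $\hat{\beta}_\kappa$ lies in $\ker\partial_n^v$ and contains only its distinguished $\kappa$ among $K$-simplices (with coefficient $1$), $c'$ is again a cycle whose $K$-coordinates all vanish, hence $c' = \sum_j t'_j \mu_j$ is supported entirely on $M$. Applying $\partial_n^v$ gives $\sum_j t'_j\,\partial_n^v(\mu_j) = 0$, and by Lemma~\ref{L:partialbasis}(2) the family $\{\partial_n^v(\mu_j)\}_{\mu_j\in M}$ is $R$-linearly independent; therefore every $t'_j$ vanishes, $c'=0$, and $c = \sum_\kappa s_\kappa \hat{\beta}_\kappa$.

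The hard part is this last deduction, because one must annihilate an arbitrary $R$-chain supported on $M$: only \emph{$R$-linear} independence of $\{\partial_n^v(\mu_j)\}$, not the easier $\mathbb{F}$-linear independence obtainable from Lemma~\ref{L:basis}, is strong enough. That is exactly what Lemma~\ref{L:partialbasis}(2) furnishes, so the main content of the proof is assembling the two preceding lemmas in the right order rather than performing any new calculation.
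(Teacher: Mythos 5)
Your proposal is correct and follows essentially the same route as the paper's own proof: establish (i) and (ii) from the construction in Lemma~\ref{L:basis} (the minimum-weight property giving $\gcd(\theta_n(\beta_\kappa))=v(\kappa)$, hence coefficient one on $\kappa$), deduce $R$-linear independence by coefficient comparison, and prove spanning by subtracting $\sum_\kappa s_\kappa\hat{\beta}_\kappa$ from an arbitrary cycle and invoking the $R$-linear independence of $\{\partial_n^v(\mu_j)\mid\mu_j\in M\}$ from Lemma~\ref{L:partialbasis}(2). Your explicit remark that $H_n^v(X^n)=\ker\partial_n^v$ (since $X^n$ has no $(n+1)$-simplices) and the displayed formula for $\hat{\beta}_\kappa$ merely make explicit what the paper leaves implicit.
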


\begin{proof}
By construction and Lemma~\ref{L:basis}, items $(i)$ and $(ii)$ are true and furthermore imply $\hat{\mathcal{B}}_K$ are linearly independent. It suffices to show $\langle\hat{\mathcal{B}}_K\rangle_R=H_{n}^v(X^n)$.

Any $H_{n}^v(X^n)$-cycle $c=\sum a_j \sigma_j$ contains at least one $\kappa\in K$.
Let $c_0=c-\sum_{\kappa \in K} a_\kappa \hat{\beta}_\kappa$ be $c$ with all its $\kappa$ simplices eliminated. Then the cycle $c_0$ contains only simplices in $M$, which indicates that $0=\partial_{n}^v(c_0)=\sum_{\mu_j\in M}a_j \partial_{n}^v(\mu_j)$. Since $\{\partial_{n}^v(\mu_j)| \mu_j \in M\}$ are $R$-linearly independent by Lemma~\ref{L:partialbasis},
we have $a_j=0$ for any $j$. Therefore $c_0=0$ and $c=\sum_{\kappa \in K} a_\kappa \hat{\beta}_\kappa$.
\end{proof}

\begin{theorem}\label{T:basis4}
Let $(X,v)$ be a weighted complex. Suppose that $\{\beta_{\kappa_i} | \kappa_i\in K_n\}$ is a basis  for $H_{n}(X^n;\mathbb{F})$ given by Lemma~\ref{L:basis},
and $\{\partial_{n+1}(\mu_j)| \mu_j\in M_{n+1}\}$  is the corresponding basis of $\partial_{n+1} (C_{n+1}(X;\mathbb{F}))$ given by Lemma~\ref{L:partialbasis}. 
Denote by $K_n=\{\kappa_1,\kappa_2,\ldots,\kappa_q\}$  with $\omega(\kappa_i)\leq \omega(\kappa_{i+1})$,
and  $M_{n+1}=\{\mu_1,\mu_2,\ldots,\mu_p\}$  with $\omega(\mu_j)\geq \omega(\mu_{j+1})$.
	Then	there exists a basis of  $H_{n}(X^n;\mathbb{F})$, denoted by $\phi_1,\phi_2,\ldots, \phi_p,\ldots, \phi_q$, satisfying the following conditions\\
	(1)  $\phi_1,\phi_2,\ldots, \phi_p$ is a basis of 
	$\partial_{n+1} (C_{n+1}(X;\mathbb{F}))$,\\
	(2) there exists a partition $K_n=K_1\dot{\cup}K_2$ having $K_1=\{\kappa_{i_1},\kappa_{i_2},\ldots,\kappa_{i_p}\}$ 
	together with a 
	bijection $\Phi: K_1 \to M_{n+1}$ with  $\Phi(\kappa_{i_k})=\mu_{j_k}$  such that 
\[
\{\phi_{p+1},\ldots, \phi_q\} =\{\beta_\kappa| \kappa\in K_2\},
\]
	and for any $1\leq k\leq p$
	\[
\phi_k = \sum_{l\geq i_k} b_{l}\beta_{\kappa_{l}} =\sum_{h\leq j_k} d_h \partial_{n+1}(\mu_h).
	\]
	where $b_l,d_h\in \mathbb{F}$, $b_{i_k}\neq 0$ and $d_{j_k}\neq 0$.
\end{theorem}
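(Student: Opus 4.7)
My plan is to construct the basis $\phi_1,\dots,\phi_p$ explicitly via a column-oriented Gaussian elimination on the coordinate matrix of $\partial_{n+1}$, with the strategic choice $j_k := k$ (equivalently, $\Phi(\kappa_{i_k})=\mu_k$) that makes both echelon structures fall out of a single iterative construction.

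First I would pass to coordinates: write $\partial_{n+1}(\mu_j)=\sum_{l=1}^{q} a_{l,j}\,\beta_{\kappa_l}$ with $a_{l,j}\in\mathbb{F}$, getting a matrix $A=(a_{l,j})\in\mathbb{F}^{q\times p}$ of rank $p$ by Lemma~\ref{L:partialbasis}. Next, I would define $\phi_k$ inductively in $k=1,\dots,p$ as follows: set $\psi_k^{(0)}:=\partial_{n+1}(\mu_k)$ and, for $k'=1,\dots,k-1$, apply the update $\psi_k^{(k')}:=\psi_k^{(k'-1)}-\psi_k^{(k'-1)}[i_{k'}]\,\phi_{k'}$ that cancels the $\beta_{\kappa_{i_{k'}}}$-coordinate. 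Denote $\psi_k:=\psi_k^{(k-1)}$, let $i_k$ be the smallest row $l$ with $\psi_k[l]\neq 0$, and normalize $\phi_k:=\psi_k/\psi_k[i_k]$. That $\psi_k\neq 0$ follows from the $\mathbb{F}$-linear independence of $\partial_{n+1}(\mu_1),\dots,\partial_{n+1}(\mu_k)$ together with the inductive fact that $\phi_1,\dots,\phi_{k-1}$ span the same $\mathbb{F}$-subspace as $\partial_{n+1}(\mu_1),\dots,\partial_{n+1}(\mu_{k-1})$.

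The heart of the argument is the invariant
\[
\phi_{k'}[i_{k''}]=\delta_{k'',k'} \qquad\text{for all } 1\le k''\le k'\le p,
\]
which I expect to be the main obstacle and would prove by induction on $k'$: the inductive hypothesis guarantees that when subtracting multiples of $\phi_{k'}$ in step $k$, the already-zeroed coordinates $i_1,\dots,i_{k'-1}$ are not reintroduced, precisely because $\phi_{k'}[i_{k''}]=0$ for $k''<k'$. The invariant ensures that $i_k\notin\{i_1,\dots,i_{k-1}\}$, yielding $p$ distinct pivot rows, and it directly gives the expansion $\phi_k=\sum_{l\ge i_k} b_l\,\beta_{\kappa_l}$ with $b_{i_k}=1$. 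Unrolling the recursion shows $\psi_k=\partial_{n+1}(\mu_k)+\sum_{h<k}(\text{scalar})\,\partial_{n+1}(\mu_h)$, so after normalization $\phi_k=\sum_{h\le k} d_h\,\partial_{n+1}(\mu_h)$ with $d_k=1/\psi_k[i_k]\neq 0$, which is the required second expansion upon taking $j_k=k$.

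To finish I would set $K_1:=\{\kappa_{i_1},\dots,\kappa_{i_p}\}$, $K_2:=K_n\setminus K_1$, $\Phi(\kappa_{i_k}):=\mu_k$, and let $\phi_{p+1},\dots,\phi_q$ be an enumeration of $\{\beta_\kappa\mid\kappa\in K_2\}$. Condition (1) is then immediate, since $\phi_1,\dots,\phi_p$ lie in $\partial_{n+1}(C_{n+1}(X;\mathbb{F}))$, are linearly independent (distinct pivot rows), and number exactly $p=\dim\partial_{n+1}(C_{n+1}(X;\mathbb{F}))$. That the full family $\phi_1,\dots,\phi_q$ is a basis of $H_n(X^n;\mathbb{F})$ follows by the standard echelon argument: the leading $\beta$-indices $i_1,\dots,i_p$ of the first $p$ vectors, together with the leading indices of the $\beta_\kappa$ for $\kappa\in K_2$, partition $\{1,\dots,q\}$, forcing $\mathbb{F}$-linear independence.
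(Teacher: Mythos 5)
Your construction is correct and does prove the statement: the invariant $\phi_{k'}[i_{k''}]=\delta_{k'',k'}$ for $k''\le k'$ gives distinct pivot rows, makes $i_k$ the leading $\beta$-index of $\phi_k$, and yields both echelon expansions with $b_{i_k}=1$ and $d_{j_k}\ne 0$ for your pairing $\Phi(\kappa_{i_k})=\mu_k$; the counting and leading-index arguments for (1) and for the full family $\phi_1,\ldots,\phi_q$ being a basis are sound. The paper proves the same result by the transposed elimination: its outer loop runs over the $\kappa_k$ in increasing weight order, and for each $\kappa_k$ it selects the first not-yet-used column $t_j=\partial_{n+1}(\mu_j)$ whose $\beta_k$-coefficient is nonzero, clears that coefficient from the other columns, and replaces $\beta_k$ by $t_{j_k}$ in the homology basis, maintaining the bases $B^{(k)}$ and $T^{(k)}$ in parallel. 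Your column-driven, single-pass echelon reduction is leaner, but in general it produces a different bijection $\Phi$ and different pivot positions $(i_k,j_k)$ than the paper's row-driven scan. This is harmless: Theorem~\ref{T:basis6} uses only the properties listed in the present statement (and $\omega(\kappa_{i_k})\ge\omega(\mu_{j_k})$ is automatic, since $\kappa_{i_k}$ occurs as a simplex of $\phi_k\in\langle\partial_{n+1}(\mu_h)\mid h\le j_k\rangle_{\mathbb{F}}$), so by uniqueness of invariant factors your pairing yields the same torsion exponents. What the paper's pivot rule buys is the observation in its Remark that the selected entry is already minimal in $\pi$-power along both its row and its column, i.e., it is directly a viable pivot for the Smith normal form over $\mathbb{F}[[\pi]]$, whereas your rule gives a simpler recursion at the cost of that interpretation.
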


\begin{proof}
By hypothesis $\rnk H_{n}(X^n;\mathbb{F})=q$ and $\rnk \Ima \partial_{n+1}=p$. We proceed by examining in order elements from $K_n$ and recursively constructing a sequence $\{B^{(k)}=\{\beta^{(k)}_1,\beta^{(k)}_2,\ldots,\beta^{(k)}_q\}\}_k$ of bases for $H_{n}(X^n;\mathbb{F})$ and a corresponding sequence of bases $\{T^{(k)}=\{t^{(k)}_1,t^{(k)}_2,\ldots,t^{(k)}_p\}\}_k$ for $\Ima \partial_{n+1}$. At each step $k$ we denote $T_1^{(k)}:=T^{(k)}$ and $B^{(k)}:=B_1^{(k)}\dot\cup B_2^{(k)}$ depending on whether the elements are contained in $\Ima \partial_{n+1}$ or not, respectively. Inducting on $k$, we show that $B ^{(k)}$ and $T^{(k)}$ satisfy\\
(i) $B ^{(k)}$ is a basis for $H_{n}(X^n;\mathbb{F})$  and $T^{(k)}$ is a basis for $\Ima \partial_{n+1}$,\\
(ii) $T_1^{(k)}= B_1^{(k)}$,\\
(iii) 	Each $t^{(k)}_j\not \in T_1^{(k)} $ does not contain any $\beta^{(k)}_{i}$-terms for $i\leq k$, i.e., for $q^{(k)}_{i,j}\in  \mathbb{F}$,
\[
t^{(k)}_j=\sum_{i\geq k+1} q^{(k)}_{i,j}\beta^{(k)}_{i}.
\]

The base case:\\
Initialize
$B ^{(0)}=\{\beta^{(0)}_1=\beta_{\kappa_1},\beta^{(0)}_2=\beta_{\kappa_2},\ldots,\beta^{(0)}_q=\beta_{\kappa_q}\} $, $T^{(0)}=\{t^{(0)}_1=\partial_{n+1}(\mu_1),t^{(0)}_2=\partial_{n+1}(\mu_2),\ldots,t^{(0)}_p= \partial_{n+1}(\mu_p)\} $ and set $T_1^{(0)}=B_1^{(0)}=B_2^{(0)}=\varnothing$. Lemma~\ref{L:basis}  guarantees that $B ^{(0)}$ and $T ^{(0)}$ are the bases of $H_{n}(X^n;\mathbb{F})$  and $\Ima \partial_{n+1}$, respectively. Then, for $1\leq j \leq p$
\[
t^{(0)}_j=\partial_{n+1}(\mu_j)=\sum_{\{i|\kappa_i\subset\mu_j\}}q^{(0)}_{i,j}\beta^{(0)}_{i} =\sum_{i\geq 1}q^{(0)}_{i,j}\beta^{(0)}_{i},
\]
where $q^{(0)}_{i,j}=(-1)^{c_{i,j}} \in \mathbb{F}$ if $\kappa_i\subset \mu_j$ and $q^{(0)}_{i,j}=0$ otherwise.

Induction step $k$:\\  
Suppose $B^{(k-1)}$ and $T^{(k-1)}$ are constructed and examine $\kappa_k$. To find out the $t^{(k-1)}_j$ that contains a $\beta^{(k-1)}_k$-term and has the minimum weight, we check  the minimum $j_k$ such that $t^{(k-1)}_{j_k}\not \in T_1^{(k-1)}$ and $q^{(k-1)}_{k,j_k}\neq 0$. We have two cases
\begin{enumerate}
\item  $j_k$ does not exist i.e., $q^{(k-1)}_{k,j}=0$ for $1\leq j \leq p$. Set $B^{(k)}=B ^{(k-1)}$ by $\beta^{(k)}_i =\beta^{(k-1)}_i $ for $1\leq i\leq q$ and $T^{(k)}=T^{(k-1)}$ by $t^{(k)}_j=t^{(k-1)}_j$ for $1\leq j \leq p$. Finally, set $T_1^{(k)}=T_1^{(k-1)},B_1^{(k)}=B_1^{(k-1)}$, $B_2^{(k)}=B_2^{(k-1)}\cup\{\beta^{(k)}_k \}$ and $q^{(k)}_{i,j}=q^{(k-1)}_{i,j}$. By (iii) in the induction hypothesis, $t^{(k-1)}_j=\sum_{i\geq k} q^{(k)}_{i,j}\beta^{(k)}_{i}$. Since $q^{(k)}_{k,j}=q^{(k-1)}_{k,j}=0$ for $1\leq j \leq p$,
$
t^{(k)}_j=t^{(k-1)}_j=\sum_{i\geq k+1} q^{(k)}_{i,j}\beta^{(k)}_{i}.
$
\item $j_k$ exists i.e., $q^{(k-1)}_{k,j_k}\neq 0$. Let $t^{(k)}_{j_k}=t^{(k-1)}_{j_k}$ and $t^{(k)}_{j}=t^{(k-1)}_{j} -  \frac{q^{(k-1)}_{k,j}}{q^{(k-1)}_{k,j_k}} t^{(k-1)}_{j_k} $ for any $j\neq j_k$ (effectively eliminating the $\beta^{(k-1)}_k$-term in $t^{(k-1)}_{j}$). Note, $T^{(k)}$ remains a basis of $\Ima \partial_{n+1}$. For $B ^{(k)}$, let $\beta^{(k)}_i =\beta^{(k-1)}_i$ for $i\neq k$, and replace $\beta^{(k-1)}_k $ by $t^{(k-1)}_{j_k}$, i.e.,
\[
\beta^{(k)}_k =t^{(k-1)}_{j_k}=q^{(k-1)}_{k,j_k} \beta^{(k-1)}_k +\sum_{i\geq k+1} q^{(k-1)}_{i,j_k}  \beta^{(k-1)}_{i}.
\]
Since $q^{(k-1)}_{k,j_k}\neq 0$,  $B^{(k)}$ remains a basis of $H_{n}(X^n;\mathbb{F})$. Finally, let $T_1^{(k)}=T_1^{(k-1)}\cup \{t^{(k)}_{j_k}\}, B_1^{(k)}=B_1^{(k-1)}\cup\{\beta^{(k)}_k \}$, and $B_2^{(k)}=B_2^{(k-1)}$. By construction, $t^{(k)}_{j_k}=\beta^{(k)}_k$ and so $T_1^{(k)}=B_1^{(k)}$. By (iii) in the induction hypothesis, for $t^{(k)}_j\not \in T_1^{(k)}$,
\begin{align*}
t^{(k)}_{j}=t^{(k-1)}_{j} - \frac{q^{(k-1)}_{k,j}}{q^{(k-1)}_{k,j_k}} t^{(k-1)}_{j_k}=\sum_{i\geq k+1} \Big(q^{(k-1)}_{i,j}-\frac{q^{(k-1)}_{k,j} q^{(k-1)}_{i,j_k}}{q^{(k-1)}_{k,j_k}}\Big)\beta^{(k-1)}_{i}.
\end{align*}
Namely, $t^{(k)}_{j}=\sum_{i\geq k+1} q^{(k)}_{i,j}\beta^{(k)}_{i}$ with $q^{(k)}_{i,j}=q^{(k-1)}_{i,j}-\frac{q^{(k-1)}_{k,j}q^{(k-1)}_{i,j_k}}{q^{(k-1)}_{k,j_k}}\in \mathbb{F}$.Note that $j_k$ is the minimum having  $t^{(k-1)}_{j_k}\not \in T_1^{(k-1)}$ and $q^{(k-1)}_{k,j_k}\neq 0$. For those $j$'s satisfying  $t^{(k-1)}_{j} \in T_1^{(k-1)} $ or $j<j_k$, $q^{(k-1)}_{k,j}=0$ and $t^{(k)}_{j}=t^{(k-1)}_{j}$.

\end{enumerate}

The procedure terminates with a basis $B^{(q)}$ for $H_{n}(X^n;\mathbb{F})$ and a basis $T^{(q)}$ for $\Ima \partial_{n+1}$. Let $B_1^{(q)}=\{\phi_1,\phi_2,\ldots, \phi_p \}$ and $B_2^{(q)}=\{\phi_{p+1},\phi_{p+2},\ldots, \phi_q \}$. By construction $B^{(q)}=B_1^{(q)}\dot\cup B_2^{(q)}$ and  $T^{(q)}=T_1^{(q)}=B_1^{(q)}=\{ \phi_1,\phi_2,\ldots, \phi_p \}$. We set $K_1$ to be the set of $\kappa_k$ having Case 2, denoted  by $\{\kappa_{i_1},\kappa_{i_2},\ldots,\kappa_{i_p}\}$. In Case 2, each $\kappa_{i_k}$ corresponds to a unique $\mu_{j_k}$. Thus we establish a bijection between $K_1$ and $ M_{n+1}$ via $\Phi(\kappa_{i_k})=\mu_{j_k}$. Each  $\phi_k$ corresponds to $\beta^{(i_k)}_{i_k} \in B_1^{(q)}$ in case 2, thus
\[
\phi_k=\beta^{(i_k)}_{i_k} =\sum_{l\geq i_k} q^{(i_k-1)}_{l,j_k}  \beta^{(i_k-1)}_{l}=\sum_{l\geq  i_k}  b_l \beta_{\kappa_l}  
\] 
with $b_l= q^{(i_k-1)}_{l,j_k} $ and  $b_{i_k}=q^{(i_k-1)}_{i_k,j_k}\neq 0$. On the other hand, $\phi_k=\beta^{(i_k)}_{i_k} =t^{(i_k-1)}_{j_k}$. Each time we modify $t^{(s)}_{j_k}$, we always subtract
some $t^{(s-1)}_{j}$-term having $j\leq j_k$. Hence 
\[
\phi_k=t^{(i_k-1)}_{j_k}= \sum_{h\leq j_k} d_h t^{(0)}_{h}= \sum_{h\leq j_k} d_h \partial_{n+1}(\mu_h).
\]
By construction, each $\beta^{(k)}_k \in B_2^{(q)}$ must come from Case 1 as such it was not modified, i.e., $\beta^{(k)}_k =\beta^{(0)}_k $ and so $B_2^{(q)}=\{\beta_\kappa| \kappa\in K_2\}$.
\end{proof}

Now we are finally in position to describe the structure of $H_{n}^v(X)$.

\begin{theorem}\label{T:basis6}
Let $(X,v)$ be a weighted complex and let $\{\phi_1,\phi_2,\ldots, \phi_p,\ldots, \phi_q\}$ be a basis for $H_{n}(X^n;\mathbb{F})$ given by Theorem~\ref{T:basis4}. Then $\{\hat{\phi}_1,\hat{\phi}_2,\ldots, \hat{\phi}_p,\ldots, \hat{\phi}_q\}$ is a basis for $H_{n}^v(X^n)$ and $\{\pi^{m_1}\hat{\phi}_1,\pi^{m_2}\hat{\phi}_2,\ldots, \pi^{m_p}\hat{\phi}_p\}$ is a basis for $\partial_{n+1}^v (C_{n+1}(X;R))$.
Furthermore,
\begin{align*}
H_{n}^v(X) &\simeq R^l  \oplus \bigoplus_{k=1}^p R/ (\pi^{m_k}),
	\end{align*}
where $l=\rnk H_{n}(X;\mathbb{F})$, and the invariant factors are given by $\pi^{m_k} = \frac{v(\kappa_{i_k})}{v(\mu_{j_k})}$ with $\Phi(\kappa_{i_k})=\mu_{j_k}$.
\end{theorem}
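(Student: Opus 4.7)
The plan is to deduce everything from the existing bases provided by Theorem~\ref{T:basis} (namely $\{\hat{\beta}_\kappa\}_{\kappa\in K_n}$ for $H_n^v(X^n)$) and Lemma~\ref{L:partialbasis}(2) (namely $\{\partial_{n+1}^v(\mu_h)\}_{\mu_h\in M_{n+1}}$ for $\partial_{n+1}^v(C_{n+1}(X;R))$), by showing that the transitions to $\{\hat{\phi}_k\}$ and $\{\pi^{m_k}\hat{\phi}_k\}$ are given by triangular matrices over $R$ with unit diagonal.

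First, I would verify that $\{\hat{\phi}_k\}_{k=1}^q$ is a basis for $H_n^v(X^n)$. By Theorem~\ref{T:basis4}, $\phi_k=\sum_{l\ge i_k}b_l\beta_{\kappa_l}$ with $b_{i_k}\ne 0$ for $k\le p$, and $\phi_k=\beta_{\kappa}$ for some $\kappa\in K_2$ for $k>p$. Since the $\kappa_l$ are ordered by increasing weight and each $\beta_{\kappa_l}$ has $\kappa_l$ as a minimum-weight simplex, $\kappa_{i_k}$ attains the minimum weight in the support of $\phi_k$, so $\gcd(\theta_n(\phi_k))=v(\kappa_{i_k})$ and $\hat{\phi}_k=\sum_{l\ge i_k}b_l\frac{v(\kappa_l)}{v(\kappa_{i_k})}\hat{\beta}_{\kappa_l}$ with $R$-coefficients. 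Re-indexing by the position of $\kappa_{i_k}$ in $K_n$, the transition matrix from $\{\hat{\beta}_\kappa\}$ is upper-triangular with diagonal entries $b_{i_k}$ or $1$, all units in $R$, hence invertible.

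The step I expect to be the main obstacle is proving $m_k=\omega(\kappa_{i_k})-\omega(\mu_{j_k})\ge 0$, because the pairing $\Phi(\kappa_{i_k})=\mu_{j_k}$ is produced by the recursion in Theorem~\ref{T:basis4} and need not arise from a direct incidence $\kappa_{i_k}\subset\mu_{j_k}$. I would induct on $s$ to prove the auxiliary claim: $q^{(s)}_{i,j}\ne 0$ implies $\omega(\kappa_i)\ge\omega(\mu_j)$. The base case $s=0$ is the weighted-complex axiom, since $q^{(0)}_{i,j}\ne 0$ iff $\kappa_i\subset\mu_j$. For the inductive step, the recursion $q^{(s)}_{i,j}=q^{(s-1)}_{i,j}-q^{(s-1)}_{s,j}q^{(s-1)}_{i,j_s}/q^{(s-1)}_{s,j_s}$ combined with the fact that every $i$ with $q^{(s-1)}_{i,\ast}\ne 0$ (for indices outside $T_1^{(s-1)}$) satisfies $i\ge s$ (item (iii) of the induction inside Theorem~\ref{T:basis4}) lets me apply the monotonicity $\omega(\kappa_i)\ge\omega(\kappa_s)$ to both candidate nonzero terms; this forces $\omega(\kappa_i)\ge\omega(\mu_j)$ either directly by the first term or via $\omega(\kappa_i)\ge\omega(\kappa_s)\ge\omega(\mu_j)$ for the second. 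Applied with $(s,i,j)=(i_k-1,i_k,j_k)$ this yields $m_k\ge 0$.

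Next, I would compute $\pi^{m_k}\hat{\phi}_k$ in the basis of Lemma~\ref{L:partialbasis}(2). The identity $\phi_k=\sum_{h\le j_k}d_h\partial_{n+1}(\mu_h)$ with $d_{j_k}\ne 0$, combined with $\theta_n\partial_{n+1}=\partial_{n+1}^v\theta_{n+1}$, yields $\pi^{m_k}\hat{\phi}_k=\sum_{h\le j_k}d_h\frac{v(\mu_h)}{v(\mu_{j_k})}\partial_{n+1}^v(\mu_h)$; the coefficients lie in $R$ because $h\le j_k$ forces $\omega(\mu_h)\ge\omega(\mu_{j_k})$. Re-indexing by $j_k$, this is a lower-triangular transition with diagonal $d_{j_k}\in\mathbb{F}^\times\subset R^\times$, so $\{\pi^{m_k}\hat{\phi}_k\}_{k=1}^p$ is an $R$-basis for $\partial_{n+1}^v(C_{n+1}(X;R))$. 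Since $X^n$ has no $(n+1)$-simplices, $H_n^v(X)\cong H_n^v(X^n)/\partial_{n+1}^v(C_{n+1}(X;R))$, and quotienting the free rank-$q$ module with basis $\{\hat{\phi}_k\}_{k=1}^q$ by the submodule with basis $\{\pi^{m_k}\hat{\phi}_k\}_{k=1}^p$ gives the decomposition $R^{q-p}\oplus\bigoplus_{k=1}^p R/(\pi^{m_k})$, where $q-p=l$ because $q=\rnk\ker\partial_n^{\mathbb{F}}$ and $p=\rnk\Ima\partial_{n+1}^{\mathbb{F}}$.
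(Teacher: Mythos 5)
Your proposal is correct and follows essentially the same route as the paper: express each $\hat{\phi}_k$ in the basis $\{\hat{\beta}_\kappa\}$ via a triangular matrix with unit diagonal $b_{i_k}$, express $\pi^{m_k}\hat{\phi}_k$ in the basis $\{\partial_{n+1}^v(\mu_h)\}$ via a triangular matrix with unit diagonal $d_{j_k}$ using $\theta_n\circ\partial_{n+1}=\partial_{n+1}^v\circ\theta_{n+1}$, and then read off the quotient. Your auxiliary induction on the $q^{(s)}_{i,j}$ to get $m_k\geq 0$ is valid but heavier than needed: it is left implicit in the paper and follows at once from $\phi_k=\sum_{h\leq j_k}d_h\partial_{n+1}(\mu_h)$, since every simplex in the support of $\phi_k$ is then a face of some $\mu_h$ with $h\leq j_k$ and hence has weight at least $\omega(\mu_{j_k})$, so $\omega(\kappa_{i_k})\geq\omega(\mu_{j_k})$.
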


\begin{proof}
By 	 Theorem~\ref{T:basis4}, we have $	\phi_k = \sum_{l\geq i_k} b_l \beta_{\kappa_{l}} $ for any $1\leq k\leq p$.
In view of $\gcd(\theta_n(\beta_{\kappa} ))=v(\kappa)$ and  $v(\kappa_{i})| v(\kappa_{i+1})$ for any $i$, we have $\gcd(\theta_n(\phi_k )) =\gcd_{l\geq i_k}(\gcd(\theta_n(\beta_{\kappa_{l}}) ))= \gcd_{l\geq i_k}(v(\kappa_{l} ))= v(\kappa_{i_k})  $.

Since $\theta_n(\beta_{\kappa} )=v(\kappa)  \hat{\beta}_{\kappa} $, we derive 
\begin{equation}\label{Eq:base}
\hat{\phi}_k=\frac{\theta_n(\phi_k) }{\gcd(\theta_n(\phi_k )) } = \frac{ \sum_{l\geq i_k} b_l \theta_n(\beta_{\kappa_{l}} )}{v(\kappa_{i_k}) }=b_{i_k} \hat{\beta}_{\kappa_{i_k}} + \sum_{l> i_k} b_l \frac{v(\kappa_{l}) }{v(\kappa_{i_k}) }\hat{\beta}_{\kappa_{l}}.
\end{equation}
As
$\{\phi_{p+1},\ldots, \phi_q\} =\{\beta_\kappa| \kappa\in K_2\}$,
we have 
\begin{equation}\label{Eq:base2}
\{\hat{\phi}_{p+1},\ldots, \hat{\phi}_q\} =\{\hat{\beta}_\kappa| \kappa\in K_2\}.
\end{equation}
Let $f$ be  the linear map from $\{\hat{\beta}_\kappa | \kappa\in K\}$ to $\{\hat{\phi}_i | 1\leq i\leq q \}$.
Then Eq.~\ref{Eq:base} and~\ref{Eq:base2} implies that the transformation matrix of $f$ is upper triangular and with diagonal  $b_{i_1},\dots, b_{i_p},1,\ldots,1$.
Since the matrix of $f$ is non-singular, we have 
$f$  is invertible. 
As $\{\hat{\beta}_\kappa | \kappa\in K\}$ is
a  basis of  $H_{n}^v(X^n)$ due to Theorem~\ref{T:basis}, 
$\{\hat{\phi}_i | 1\leq i\leq q \}$ is also a  basis of  $H_{n}^v(X^n)$.

By  Theorem~\ref{T:basis4}, we have
$
\phi_k =\sum_{h\leq j_k} d_h \partial_{n+1}(\mu_h).
$
We set $$\psi_k =\sum_{h\leq j_k} d_h \frac{v(\mu_{h})  }{v(\mu_{j_k}) }  \partial_{n+1}^v(\mu_h).$$
Let $g$ be  the linear map from $\{\partial_{n+1}^v(\mu_j)| \mu_j \in M_{n+1}\}$ to $\{\psi_k| 1\leq i\leq p \}$.
By arranging $\{\partial_{n+1}^v(\mu_j)| \mu_j \in M_{n+1} \}$,
we derive that  the transformation matrix of $g$ is upper triangular and with diagonal  $d_{1},\dots, d_{p}$.
Since the matrix of $g$ is non-singular, we have 
$g$  is invertible.
As $\{\partial_{n+1}^v(\mu_j)| \mu_j \in M_{n+1}\}$ 
  is a basis of $\Ima \partial_{n+1}^v$ due to  Lemma~\ref{L:partialbasis}, 
  $\{\psi_k| 1\leq i\leq p \}$ is also a  basis of $\Ima \partial_{n+1}^v$.
Utilizing the commutativity  $\partial_{n+1}^v\circ \theta_{n+1} =   \theta_{n}  \circ \partial_{n+1}$, we derive 
\begin{align*}
\hat{\phi}_k &=\frac{\theta_n(\phi_k) }{\gcd(\theta_n(\phi_k) ) }  \\
&=  \frac{ \sum_{h\leq j_k} d_h \theta_n \circ \partial_{n+1}(\mu_h) } {v(\kappa_{i_k}) } \\
&=   \frac{ \sum_{h\leq j_k} d_h \partial_{n+1}^v\circ \theta_{n+1}(\mu_h) } {v(\kappa_{i_k}) } \\
&= \frac{ \sum_{h\leq j_k} d_h  v(\mu_h) \partial_{n+1}^v (\mu_h) } {v(\kappa_{i_k}) } \\
&= \frac{v(\mu_{j_k}) }{v(\kappa_{i_k}) } \psi_k.
\end{align*}
Thus  $\psi_k= \frac{v(\kappa_{i_k})}{v(\mu_{j_k})} \hat{\phi}_k= \pi^{m_k}  \hat{\phi}_k$.
Therefore
\[
H_{n}^v(X) \simeq H_{n}^v(X^n)/ \Ima \partial_{n+1}^v \simeq  \oplus_{k=1}^q \hat{\phi}_{k} R / \oplus_{k=1}^p  \pi^{m_k} \hat{\phi}_{k} R \simeq R^l  \oplus \bigoplus_{k=1}^p R/ (\pi^{m_k}),
\]
where $l=q-p=\rnk H_{n}(X^n;\mathbb{F}) -\rnk \Ima \partial_{n+1} = \rnk H_{n}(X;\mathbb{F}) $.
	\end{proof}

{\bf Remark:} Finding the basis $\{\hat{\phi}_{1},\hat{\phi}_{2},\ldots, \hat{\phi}_q \}$ is tantamount to computing the Smith normal form of the projection matrix on the sub-module $\Ima \partial_{n+1}^v$ under the basis pair $\{\hat{\beta}_\kappa | \kappa\in K_n\}$ and $\{\partial_{n+1}^v(\mu_j)| \mu_j \in M_{n+1}\}$. Taking this perspective,
the procedure in Theorem~\ref{T:basis4} is a variant of the algorithm for the Smith Normal Form (SNF). However, instead of computing the SNF in $R=\mathbb{F}[[\pi]]$, our procedure only requires computations in $\mathbb{F}$, which greatly simplifies the complexity of such an algorithm.
Moreover, in difference to the standard SNF algorithm, which 
requires  choosing a pivot and improving it until it divides all entries in both its row and column, our approach proceeds column by column in the projection matrix and selects the minimum row with non-zero entry for the respective column.
The ordering of the two bases, i.e., the increasing order of $\{\kappa_i\}_i$-weights and the decreasing order of $\{\mu_j\}_j$-weights, guarantees that the selected entry at position $(i_k,j_k)$ has the minimum $\pi$-power in both its row and column,
i.e., the entry is already a viable pivot for the SNF in $R=\mathbb{F}[[\pi]]$. Finally, our procedure provides a combinatorial interpretation of the invariant factors by means of the bijection $\Phi$ linking  the representative $\kappa_{i_k}$ and $\mu_{j_k}$.

\section{Algorithms} 
In this section we present two algorithms for computing weighted homology over a DVR with  model $R=\mathbb{F}[[\pi]]$.
Based on the theoretical developments of the previous sections, the approach is now rather simple: we first compute the basis $\mathcal{B}_K=\{\beta_\kappa | \kappa\in K_n\}$ for $H_n(X^n;\mathbb{F})$ for each dimension $n$, then perform the reduction procedure in Theorem~\ref{T:basis4} to establish  the bijection $\Phi$, see Algorithms~\ref{basis} and~\ref{couple}. The key feature being that all computations are performed directly over the residue field $\mathbb{F}$ and not the full ring. 

\begin{algorithm}
	\caption{$\beta_\kappa$-basis  algorithm}\label{basis}
	\begin{algorithmic}[1]
		\Procedure{\small $\beta_\kappa$-Basis}{}
			\While{each dimension $n$}  
	\State  initialize $M_n=K_n=\mathcal{B}_{K_n}=\varnothing$
	\State arrange $n$-simplices $\sigma_1,\sigma_2,\ldots,\sigma_l$ with $\omega(\sigma_{i})\geq \omega(\sigma_{i+1})$
	 \For{$0\leq m\leq l-1$}
	\State solve the  equation  of 
 $r_j$'s for   $r_j \in \mathbb{F}$:  $$\partial_n   (r_{m+1}\sigma_{m+1} +\sum_{\sigma_j\in M_n}r_j \sigma_j )=0$$ 
	\If{non-trivial solution of  $r_j$'s  exists} 
	\State  set $
	\kappa \gets \sigma_{m+1} 
	$ and $
	\beta_\kappa \gets \sum_{\sigma_j\in M_n} \frac{r_j}{r_{m+1}}\sigma_j +\sigma_{m+1} 
	$
	\State    update $K_n \gets K_n \cup \{ \kappa=\sigma_{m+1} \}$ and $\mathcal{B}_{K_n}\gets \mathcal{B}_{K_n} \cup \{ \beta_\kappa\}$
  \Else 
  \State update $M_n\gets M_n \cup \{ \mu_{m+1}=\sigma_{m+1} \}$

	\EndIf
	\EndFor
		\EndWhile

		\State \small  \textbf{Return} $\beta_\kappa$-basis $\mathcal{B}_{K_n} $ together with   $\{\partial_{n}(\mu_j)| \mu_j\in M_n\}$ for each dimension $n$
		\EndProcedure
	\end{algorithmic}
\end{algorithm}

\begin{algorithm}
	\caption{Bijection $\Phi$  construction}\label{couple}
	\begin{algorithmic}[1]
		\Procedure{\small $\Phi$-couples}{}
		\While{each dimension $n$}  
		\State  Input $\beta$-basis $\mathcal{B}_{K_n} $ and $\{\partial_{n+1}(\mu_j)| \mu_j\in M_{n+1}\}$
		\State arrange $n$-simplices  $\kappa_1,\kappa_2,\ldots,\kappa_q$ with $\omega(\kappa_{i})\leq \omega(\kappa_{i+1})$
		\State arrange $n+1$-simplices $\mu_1,\mu_2,\ldots,\mu_p$ with $\omega(\mu_{j})\geq \omega(\mu_{j+1})$
		\State  construct the corresponding projection matrix $Q\in \mathbb{F}_{p\times q}$
			 \For{each column $1\leq k\leq q$}
		\State  Find out the minimum row $j_k$ having  nonzero entry  in column $k$  
			\If{$j_k$ exists}
		\State  pair  $\Phi_n (\kappa_k)=\mu_{j_k} $ 
		\State  update $Q$ by eliminating any other nonzero entry  in column $k$ via subtracting  row $j_k$
		\State update $Q$ by deleting any other nonzero entries  in  row $j_k$ 		
		\EndIf
			\EndFor
			\EndWhile 
		\State \small  \textbf{Return} the coupling $\Phi$ for each dimension $n$
		\EndProcedure
	\end{algorithmic}
\end{algorithm}

\section{Software and Implementation} \sloppy
Using the insights developed in the previous sections we have implemented the python package \texttt{WeightedSimplicialHomology}, which ingests a weighted simplical complex as a specifically formatted text file and outputs its weighted homolgy at all dimensions, see Figure~\ref{fig:package example}.

The module consists of two classes: \texttt{SimplicialConstructors}, which allows the user to specify a weighted simplicial complex either through its list of simplices or by specifying its maximal simplices (in this latter case all of the simplex weights are the same), and \texttt{mWSC} which implements the algorithms of section 4. The results of section 3 allow one to bypass computing the Smith Normal Form of a boundary matrix with entries in $\mathbb{F}[[\pi]]$ and instead reduces the problem to linear algebra over the residue field $\mathbb{F}$. \texttt{WeightedSimplicialHomology} takes advantage of this observation and leverages the already extant linear algebra capabilities of NumPy in this regard. 

The method \texttt{compute\_K\_basis} implements algorithm \ref{basis}. Checking for a nontrivial solution to the matrix equation~(\ref{Eq:beta}) is accomplished by iteratively examining columns of the boundary matrix and verifying at each step that the rank is less than the number of columns, which indicates the existence of such a nontrivial solution. Once said solution is known to exist it is computed using \texttt{numpy.linalg.lstsq}, which returns a least-squares result. This method is applied to the equation 
\[
\partial_{n}\Big(\sum_{\sigma_{j}\in M_{m}}r_{j}\sigma_{j} \Big) = \sigma_{m+1}.
\]
Since we have determined there is a unique (up to scaling) solution to this equation in the above step, \texttt{lstsq} will return precisely this solution, which is then added to the basis under construction.

The method \texttt{find\_basis\_pairing} implements algorithm \ref{couple}. We construct the matrix representing the projection of $\Ima \partial_{n+1}$ onto $\ker \partial_{n}$ spanned by the $\beta$-basis $\mathcal{B}_{K_{n}}$. For each column of $Q$ we find the first non-zero entry and use this as a pivot in elementary row operations to eliminate all other entries in its column. After each reduction step, as a precaution, we opt to zero out any entries smaller in absolute value than a fixed tolerance $\epsilon$, where 
$\epsilon$ is some multiple of the machine precision - in practice however we have not yet observed any difference in behavior resulting from this step. 

Our initial attempt at constructing this pipeline proceeded by directly computing the Smith Normal Form of
$\partial_{\ast}^{v}$ with coefficients in $\mathbb{F}[x]_{(x)}$ - the polynomial ring $\mathbb{F}[x]$ localized at the $x$ indeterminate. The computation resulted in ballooning of both the coefficients of polynomials and of their degrees, which made the computation unfeasible even for matrices as small as $7 \times 7$. Having circumvented the need to directly represent elements of the DVR by our theoretical considerations, and having efficient Python native linear algebra tools available, \texttt{WeightedSimplicialHomology} no longer suffers the same ballooning in run-time and memory. A such, we are able to compute the homology of canonical examples such as $T^{2}$, with a triangulation using 17 $2$-simplices, and $\mathbb{R}P^{2}$, with a triangulation using 9 $2$-simplices, in less than a second. This suggests that that \texttt{WeightedSimplicialHomology} will be able to scale up to medium-sized complexes while still running in a moderate amount of time (this will be the purview of an upcoming benchmarking paper). 

\texttt{WeightedSimplicialHomology} currently computes weighted homology over the ring $\mathbb{Q}[[\pi]]$. Future iterations will have the ability to compute weighted homology over $\mathbb{F}_{p}[[\pi]]$ for $p$ prime, and possibly over $\mathbb{F}_{q}[[\pi]]$ for $q = p^{k}$. The source code for this project can be found at \url{https://gitlab.com/Nerlf/WeightedSimplicialHomology}.

\begin{figure}
    \centering
    \includegraphics[scale = 0.425]{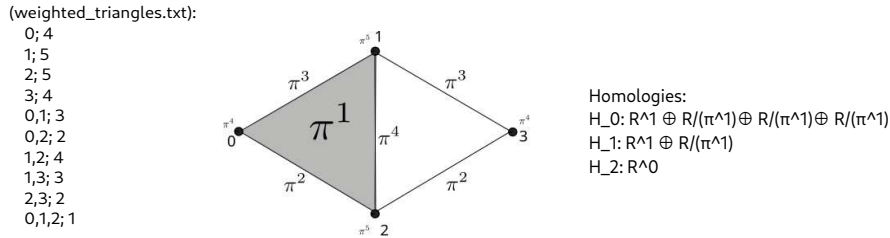}
    \caption{Left: input file for specifying a non-trivially weighted complex using \texttt{Simplicial\_File\_Reader;}
    Center: The complex specified by the file, consisting of a filled triangle and an empty triangle joined along an edge;
    Right: The weighted homology of the weighted complex is printed using \texttt{print\_Homology}, of note is the presence of nontrivial 
    torsion in degrees $0$ and $1$.}
    \label{fig:package example}
\end{figure}

\section*{Acknowledgments}
This work was supported by the VDH Grant PV-BII VDH COVID-19 Modeling Program VDH-21-501-0135.

\bibliographystyle{elsarticle-harv} 

\end{document}